\documentclass[12pt]{article}

\usepackage{amsmath,amsthm}
\usepackage{mathtools}
\usepackage{xcolor}
\usepackage{amssymb}
\usepackage{amsfonts}
\usepackage{graphicx}
\usepackage{booktabs}
\usepackage{units}
\usepackage{xcolor}
\usepackage{comment}
\usepackage{caption}
\usepackage{subcaption} 
\usepackage{dblfloatfix}   
\usepackage{algorithm}
\usepackage{algorithmicx}
\usepackage{algpseudocode}
\algrenewcommand\algorithmicindent{0.5em}
\usepackage{commath}

\usepackage{hyperref}
\hypersetup{
    colorlinks=true,
    linkcolor=blue,     
    urlcolor=blue,
}
\usepackage{xparse}

\usepackage{tikz}
\usepackage{pgfplots}
\usepackage{appendix}

\usepackage{enumitem}

\floatname{algorithm}{Pseudo code}
\DeclareMathOperator{\ran}{ran}
\DeclareMathOperator{\sym}{sym}

\newtheorem{definition}{Definition}

\allowdisplaybreaks
\newcounter{thmcounter}[section]  
\renewcommand{\thethmcounter}{\thesection.\arabic{thmcounter}}

\newcommand{\defthmwithqed}[2]{%
  \NewDocumentEnvironment{#1}{ o }{%
    \refstepcounter{thmcounter}
    \begin{trivlist}%
      \item[\hskip \labelsep \bfseries #2~\thethmcounter%
        \IfValueT{##1}{\ (##1)}.]%
  }{%
    \hfill $\square$%
    \end{trivlist}%
  }%
}

\newtheorem{theorem}{Theorem}
\newtheorem{lemma}{Lemma}

\newtheorem{proposition}{Proposition}

\newtheorem{corollary}{Corollary}
\newtheorem{remark}{Remark}

\usepackage{tikz}
\usetikzlibrary{arrows.meta,positioning,calc,decorations.pathmorphing}

{\left(\begin{smallmatrix}}
{\end{smallmatrix}\right)}

{\left[\begin{smallmatrix}}
{\end{smallmatrix}\right]}%

\title{Dissipativity properties of a class of nonlinear time-delay systems via Bessel--Legendre inequalities}
\author{Ikram El Haskouki\footnote{AIMCE Laboratory, ENSAM, Hassan II University of Casablanca, 150 Nile Boulevard, Casablanca 20670, Morocco and University of Wuppertal, Gaußstraße 20, 42119 Wuppertal, Germany (e-mail: \texttt{ikram.elhaskouki[at]gmail.com})} and Hannes Gernandt\footnote{University of Wuppertal, Gaußstraße 20, 42119 Wuppertal and Fraunhofer IEG, Fraunhofer Research Institution for Energy Infrastructures and Geotechnologies IEG, 03046 Cottbus, Germany  (e-mail: \texttt{gernandt[at]uni-wuppertal.de})} 
}

\date{\today}

\begin{document}

\maketitle

\begin{abstract}
Time delays are inherent in many physical and engineered systems and can significantly affect 
their stability and performance. In this work, we investigate the dissipativity of a class of nonlinear time-delay systems with multiple discrete delays and derive sufficient conditions for both delay-dependent and delay-independent dissipativity using Bessel--Legendre inequalities. For linear systems, the resulting  dissipativity conditions are expressed in terms of linear matrix inequalities (LMIs) which can be solved numerically to obtain Lyapunov--Krasovskii-type storage functions. 
\end{abstract}

\maketitle

\section{Introduction}
\label{sec:introduction}
Time delays arise naturally in many physical, biological, and engineering systems, for instance due to transport phenomena, communication networks, measurement processes, or delayed feedback. Their presence may significantly influence both stability and performance and can lead to qualitatively different behavior compared with delay-free systems. For this reason, the analysis and control of time-delay systems has been an active research area for several decades; see, for instance, \cite{Fridman2014,Scholl2024} and the references therein.

Dissipativity provides a powerful framework for the analysis and design of dynamical systems through input--output energy balance relations. It was originally introduced by Willems \cite{willems1972dissipative} and further developed by Hill and Moylan \cite{hill1976stability}. In this framework, the change of a suitable storage function is bounded by a prescribed supply rate, which describes the energy exchanged between the system and its environment. Important system properties such as impedance passivity and scattering passivity can be formulated as special cases of dissipativity by choosing appropriate quadratic supply rates. This makes dissipativity particularly useful for stability analysis, robust control design, and energy-based modeling frameworks, and in particular in form of port-Hamiltonian systems \cite{Brogliato2007,BreitHU24,haskouki2026delaydependentpassivitystabilitylinear}.

In the presence of time delays, dissipativity analysis becomes more involved, since the system state depends not only on its current value but also on its past history. Consequently, classical Lyapunov functions are generally not sufficient to capture the influence of the delayed state. Instead, Lyapunov--Krasovskii storage functionals are commonly employed, as they explicitly incorporate information on the state history \cite{Fridman2014}. A central difficulty in this approach is the estimation of integral terms involving delayed states and their derivatives. Jensen's inequality has been widely used for this purpose, but it may lead to conservative conditions. To reduce this conservatism, several refined integral inequalities have been proposed, including Wirtinger-based inequalities \cite{seuret2013jensen,seuret2013wirtinger}.

Recently, Bessel--Legendre inequalities have emerged as a powerful extension of Jensen and Wirtinger-type estimates \cite{seuret2015hierarchy}. By exploiting orthogonal projections onto Legendre polynomial bases, these inequalities provide a hierarchy of increasingly accurate estimates for integral terms while preserving a tractable structure. They are therefore well suited for deriving less conservative Lyapunov--Krasovskii conditions for time-delay systems.

Motivated by these developments, this paper investigates dissipativity properties of a class of nonlinear time-delay systems with multiple constant delays. We focus on general quadratic supply rates, which include impedance and scattering passivity as important special cases. The proposed approach combines Lyapunov--Krasovskii storage functionals with Bessel--Legendre inequalities in order to derive sufficient delay-dependent dissipativity conditions. A delay-independent criterion is obtained as a special case by considering a simplified storage functional. Moreover, for linear time-delay systems, the resulting conditions are expressed as linear matrix inequalities, which enables efficient numerical verification. Recent results on delay-dependent stability and passivity of linear port-Hamiltonian delay systems \cite{haskouki2026delaydependentpassivitystabilitylinear} further motivate the study of such dissipativity conditions in the presence of delays.

The main contributions of this paper are summarized as follows.
\begin{itemize}
\item We derive sufficient delay-dependent dissipativity conditions for nonlinear systems with multiple constant delays by means of Lyapunov--Krasovskii storage functionals and Bessel--Legendre inequalities.
\item We obtain delay-independent dissipativity conditions as a special case based on a simplified storage functional.
\item We specialize the proposed framework to linear time-delay systems and derive tractable LMI conditions for dissipativity with respect to general quadratic supply rates.
\end{itemize}
The paper is organized as follows. After introducing the notation, Section~\ref{sec:dissipative} recalls the required properties of Legendre polynomials and the Bessel--Legendre inequality. It then establishes delay-dependent and delay-\linebreak independent dissipativity conditions for nonlinear time-delay systems. Section~\ref{sec:linear} treats the linear case and derives corresponding LMI conditions. Finally, the paper concludes with a summary and an outlook on future research directions in Section~\ref{sec:conclusion}.

\section*{Notation}
Throughout the paper, $I_n$ stands for the $n \times n$ identity matrix, and $0$ denotes a zero matrix of appropriate dimension.
For $x \in  C([-\tau,0],\mathbb{R}^{n}),$ and 
$t \geqslant 0,$ one defines the function  
$x_t \in  C([-\tau,0],\mathbb{R}^{n})$ by 
$x_t(\theta) = x(t+\theta),$ for all $\theta 
\in [-\tau, 0].$ The space $C([-
\tau,0],\mathbb{R}^{n})$ of continuous functions defined from $[-
\tau,0]$ to $\mathbb{R}^{n}$  is equipped with the norm
$ \Vert \psi \Vert_{\infty} = \sup\limits_{-\tau \leqslant 
s \leqslant 0} \Vert \psi(s)\Vert,$ where  $\|\cdot\|$ denotes the Euclidean norm in $\mathbb{R}^n.$ The notation $P>0$ ($P \ge 0$), for $P \in \mathbb{R}^{n \times 
n}$ means that $P$ is symmetric and positive definite (positive 
semidefinite) and $\mathrm{sym}(A) := \frac{1}{2}(A + A^\top)$ denotes the symmetric part of a matrix $A \in \mathbb{R}^{n \times n }.$
\section{Dissipativity of nonlinear time-delay systems}
\label{sec:dissipative}
In this section, we consider a class of nonlinear time-delay systems that are given by 
\begin{align}
\nonumber 
\dot x(t) &= f(x(t),x(t-\tau_1),\ldots, x(t-\tau_q)) 
  + g(x(t))u(t),\quad t\geq 0,\\
x(t) &= \phi(t),\quad  t \in [-\tau_q , 0], \label{eq:cnt_time_delay_system_nonlinear}
\\
y(t) &= h(x(t)),\quad t \geq  0,\nonumber 
\end{align}
where $u : \mathbb{R} \to \mathbb{R}^m,$ $y : \mathbb{R} \to \mathbb{R}^m,$ $x : \mathbb{R} \to \mathbb{R}^n $ are the input, output, and state of the system, $f:\mathbb{R}^{n(q+1)}\rightarrow\mathbb{R}^n$ and $g:\mathbb{R}^n\rightarrow\mathbb{R}^{n\times m}$ all locally Lipschitz continuous and $\tau_q \geqslant\ldots \geqslant \tau_1 > 0$ for some natural number $q\geq 1$.

\begin{definition}
A delay system \eqref{eq:cnt_time_delay_system_nonlinear} with delays $\tau_q \geqslant\ldots \geqslant \tau_1 > 0$ is called \emph{dissipative} w.r.t. the continuous supply rate $s:\mathbb{R}^m\times\mathbb{R}^m\rightarrow\mathbb{R}$ if there exists a storage function $\mathcal{S}:C^1([-\tau_q,0],\mathbb{R}^n)\rightarrow[0,\infty)$ with $\mathcal{S}(0)=0$ such that the following dissipation inequality holds
\begin{align}
    \label{eq:dissip_ineq_cont_time}
\mathcal{S}(x_{t_1})-\mathcal{S}(x_{t_0})\leq \int_{t_0}^{t_1}s(y(s),u(s))\mathrm{d}s,\quad \text{for all}~ 0\leq t_0\leq t_1<\infty,
\end{align}
\end{definition}
and all initial histories {\footnotesize $\phi \in C^1([-\tau_q,0],\mathbb{R}^n)$}, where {\footnotesize
$x_t=x_{[t-\tau_q,t]}\in C([-\tau_q,0],\mathbb{R}^n)$
} is given by $x_t(\theta)=x(t+\theta)$, $\theta\in[-\tau_q,0]$, where $x$ is the solution to \eqref{eq:cnt_time_delay_system_nonlinear} for some history $\phi$. 

If the dissipation inequality \eqref{eq:dissip_ineq_cont_time} holds for all $\tau\geq 0$, then we call the system \eqref{eq:cnt_time_delay_system_nonlinear} \emph{dissipative independent of the delay.}

In this paper, we focus on quadratic supply rates of the form 
\[s(y,u) =\label{eq:supply_rate}
\begin{bmatrix} y\\ u \end{bmatrix}^\top
\begin{bmatrix} S_{11} & S_{12} \\ S_{12}^\top & S_{22} \end{bmatrix}
\begin{bmatrix} y\\ u \end{bmatrix},\]
\[S_{11}=S_{11}^\top\in\mathbb{R}^{m \times m}, \quad S_{12}\in\mathbb{R}^{m\times m},\quad S_{22}=S_{22}^\top \in \mathbb{R}^{m \times m}.
\]
\vspace{1mm}
\\
Systems which are dissipative with respect to the supply rate 
\[
s_{imp}(y,u)=\frac12\begin{bmatrix}
    y\\ u
\end{bmatrix}^\top\begin{bmatrix}
    0&I_m\\ I_m&0
\end{bmatrix}\begin{bmatrix}
    y\\ u
\end{bmatrix},\quad s_{sca}(y,u)=\frac12\begin{bmatrix}
    y\\ u
\end{bmatrix}^\top\begin{bmatrix}
    -I_m &0\\ 0&I_m
\end{bmatrix}\begin{bmatrix}
    y\\ u
\end{bmatrix}
\]
are called \emph{impedance} and \emph{scattering passive} respectively.
\vspace{1mm}\\
\subsection{Legendre Polynomials and the Bessel--Legendre inequality}
In the following, we provide a brief overview of the Legendre polynomials and their relevant properties.
\begin{definition}
The Legendre polynomials considered over the interval $[-\tau_i, 0]$ are defined by: 
\begin{equation}
    \forall k \in \mathbb{N},\,i=1,\ldots,q: \quad L_k^i(s) = (-1)^k \sum_{l=0}^k (-1)^l \binom{k}{l} \binom{k+l}{l} \left( \frac{s+\tau_i}{\tau_i} \right)^l.
\end{equation}
\end{definition}

The Legendre polynomials have the following properties, see e.g. \cite{seuret2015hierarchy}.

\begin{lemma}\rm\ \label{Lemma:Legendre_polynomials_properties}
Consider the Legendre polynomials $L_k^i$ for $i=1,\ldots,q$ and $k \in \mathbb{N}$. Then the following holds 
\begin{itemize}
\item \textit{Orthogonality:} For all $(k, l) \in \mathbb{N}^2$ and all $i=1,\ldots,q$,
\begin{equation}\nonumber
    \int_{-\tau_i}^0 L_k^i(s)L_l^i(s) \, \mathrm{d}s = 
    \begin{cases} 
        0, & k \neq l, \\
        \frac{\tau_i}{2k+1}, & k = l.
    \end{cases}
\end{equation}
\item \textit{Boundary conditions:}
$    \forall k \in \mathbb{N}, \quad L_k^i(0) = 1, \quad L_k^i(-\tau_i) = (-1)^k$,\\ $i=1,\ldots,q$.

\item \textit{Differentiation:} The derivative $\dot{L}^i_k$ is given by:
\begin{equation}\nonumber
    \dot{L}^i_k(s) = 
    \begin{cases} 
        0, & k = 0, \\
        \displaystyle \sum_{j=0}^{k-1} \frac{2j+1}{\tau_i} \left(1 - (-1)^{k+j}\right) L^i_j(s), & k \ge 1.
    \end{cases}
\end{equation}
\end{itemize}
\end{lemma}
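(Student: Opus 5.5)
Our plan is to reduce everything to the standard Legendre polynomials on $[0,1]$ via the affine change of variables $r = (s+\tau_i)/\tau_i$, which maps $[-\tau_i,0]$ onto $[0,1]$. The defining sum is exactly the explicit formula for the shifted Legendre polynomial $\tilde P_k(r) = (-1)^k\sum_{l=0}^k (-1)^l\binom{k}{l}\binom{k+l}{l} r^l$. We would identify it with the Rodrigues form $\tilde P_k(r) = \frac{1}{k!}\frac{\mathrm{d}^k}{\mathrm{d}r^k}\bigl(r^2-r\bigr)^k$. To do so, expand $(r^2-r)^k = \sum_j \binom{k}{j} (-1)^{k-j} r^{k+j}$ and differentiate $k$ times, which gives $\sum_j \binom{k}{j}(-1)^{k-j}\frac{(k+j)!}{j!\,k!} r^j$. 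This matches the sum, since $\binom{k+j}{j} = \frac{(k+j)!}{j!\,k!}$. Hence $L_k^i(s) = \tilde P_k\bigl((s+\tau_i)/\tau_i\bigr)$, and all three claims follow from the corresponding statements for $\tilde P_k$ together with $\mathrm{d}s = \tau_i\,\mathrm{d}r$ and $\frac{\mathrm{d}}{\mathrm{d}s} = \frac{1}{\tau_i}\frac{\mathrm{d}}{\mathrm{d}r}$.

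\textbf{Boundary conditions.} At $s=-\tau_i$ we have $r=0$, and only the $l=0$ term survives, giving $(-1)^k$. At $s=0$ we have $r=1$, and $L_k^i(0) = (-1)^k\sum_l (-1)^l\binom{k}{l}\binom{k+l}{l}$. It is simplest to use Rodrigues: write $(r^2-r)^k = r^k(r-1)^k$ and apply Leibniz at $r=1$. Only the term in which all $k$ derivatives fall on $(r-1)^k$ survives, giving $\frac{1}{k!}\,k!\cdot 1 = 1$.

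\textbf{Orthogonality.} For $l<k$, integrate $\int_0^1 p(r)\frac{\mathrm{d}^k}{\mathrm{d}r^k}(r^2-r)^k\,\mathrm{d}r$ by parts $k$ times, with $p$ a polynomial of degree $l$. The boundary terms vanish because $(r^2-r)^k$ has zeros of order $k$ at $0$ and $1$, and $p^{(k)}=0$. For $k=l$, the same computation gives
\[\frac{(-1)^k}{(k!)^2}\,\frac{(2k)!}{1}\int_0^1 (r^2-r)^k\,\mathrm{d}r = \frac{(2k)!}{(k!)^2}\,B(k+1,k+1) = \frac{(2k)!}{(k!)^2}\,\frac{(k!)^2}{(2k+1)!} = \frac{1}{2k+1}.\]
Multiplying by the Jacobian $\tau_i$ yields $\tau_i/(2k+1)$.

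\textbf{Differentiation.} This is the main bookkeeping step. $\dot L_k^i$ is a polynomial of degree $k-1$, so it expands in the orthogonal basis as $\dot L_k^i = \sum_{j<k} c_j L_j^i$. Orthogonality gives $c_j = \frac{2j+1}{\tau_i}\int_{-\tau_i}^0 \dot L_k^i L_j^i\,\mathrm{d}s$. Integrating by parts,
\[\int \dot L_k^i L_j^i = \bigl[L_k^i L_j^i\bigr]_{-\tau_i}^0 - \int L_k^i \dot L_j^i.\]
The last integral vanishes since $\deg \dot L_j^i < k$. The boundary term equals $1 - (-1)^{k}(-1)^j = 1-(-1)^{k+j}$ by the boundary conditions, which gives the stated formula. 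The case $k=0$ is trivial since $L_0^i\equiv 1$.

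The only delicate points are matching the explicit sum to the Rodrigues form and evaluating the beta integral; both are standard.
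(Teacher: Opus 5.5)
Your proof is correct, and it gives more than the paper does. The paper offers no argument for this lemma: it lists the three properties and refers to Seuret and Gouaisbaut \cite{seuret2015hierarchy}, where they are quoted as classical facts about shifted Legendre polynomials.

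You instead make the lemma self-contained. You identify $L_k^i(s)=\tilde P_k\bigl((s+\tau_i)/\tau_i\bigr)$ with the Rodrigues form $\frac{1}{k!}\frac{\mathrm{d}^k}{\mathrm{d}r^k}(r^2-r)^k$. You read off the boundary values via Leibniz's rule, and you get the norm $\tau_i/(2k+1)$ from $k$ integrations by parts and a Beta integral. These computations check out, including the signs.

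The most instructive step is the differentiation rule. You expand $\dot L_k^i$ in the orthogonal basis and integrate by parts once. This shows that the coefficient $\frac{2j+1}{\tau_i}(1-(-1)^{k+j})$ is just the inverse squared norm times the boundary term $\bigl[L_k^iL_j^i\bigr]_{-\tau_i}^{0}$. So the third property follows from the first two, rather than being imported from the classical theory (e.g.\ from the recurrence $P_{k+1}'-P_{k-1}'=(2k+1)P_k$ for the standard Legendre polynomials on $[-1,1]$). It is also the same integration-by-parts mechanism the paper later uses to compute $\frac{\mathrm{d}}{\mathrm{d}t}\Omega_k^i(t)$. The citation buys brevity; your argument buys verifiability.

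Two small points should be stated explicitly:
\begin{itemize}
\item $\deg L_j^i=j$, with leading coefficient $\binom{2j}{j}\tau_i^{-j}\neq 0$. Hence $\{L_j^i\}_{j<k}$ spans the polynomials of degree less than $k$, which you need for the expansion of $\dot L_k^i$.
\item The case $l>k$ of orthogonality follows by symmetry.
\end{itemize}
The vanishing of $\int_{-\tau_i}^0 L_k^i\dot L_j^i\,\mathrm{d}s$ is already covered, since you phrased the orthogonality argument for an arbitrary polynomial $p$ of degree less than $k$.
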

The following lemma which is only a minor modification of \cite[Corollary 4]{seuret2015hierarchy}, known as the Bessel–Legendre inequality, is 
established using Legendre polynomials. 
\begin{lemma}\label{lemma:legendre}
Let $x$ be such that 
$x \in C^1([-\tau_i,0],\mathbb{R}^{n})$, $R_i\geqslant0$ and $\tau_i> 0$. 
Then, the 
integral inequality
\begin{equation}\label{eq:bessel}
\int_{-\tau_i}^{0} \dot{x}(u)^\top R_i\dot{x}
(u)du \geq \frac{1}{\tau_i}\xi_{N}^i(t)^{\top} \left[ 
\sum_{k=0}^{N} (2k+1) \Gamma_{N}(k)^\top R_i 
\Gamma_N(k) \right] \xi^i_{N}(t), 
\end{equation}
holds, for all integers $N \in 
\mathbb{N}$ with $\Omega^i_k(t)= \int_{-\tau_i}^0L_k^i(s)x_t(s)\mathrm{d}s, \hspace{2mm} k\in \mathbb{N}$ and 
\begin{align*}
\xi_{N}^i(t) &= 
\begin{cases} 
[x_t(0)^\top \ x_t(-\tau_i)^\top]^\top, & \text{if } N = 
0, \\
\left[ x_t(0)^\top \quad x_t(-\tau_i)^\top \quad 
\frac{1}{\tau_i}\Omega_0^{i}(t)^\top\quad \dots 
\quad \frac{1}{\tau_i}\Omega_{N-1}^{i}(t)^\top
\right]^\top, & \text{if } N \geq 1,
\end{cases}\\
\Gamma_N(k) &= 
\begin{cases} 
\begin{bmatrix} I & -I \end{bmatrix}, 
& \text{if } N = 0, \\
\begin{bmatrix} I & (-1)^{k+1}I & 
\gamma_{Nk}^0 I & \dots & 
\gamma_{Nk}^{N-1}I \end{bmatrix}, & 
\text{if } N \geq  1,
\end{cases}\\
\gamma_{Nk}^j &= 
\begin{cases} 
-(2j+1)(1 - (-1)^{k+j}), & \text{if } 
j \leq k, \\
0, & \text{if } j > k.
\end{cases}
\end{align*}
\end{lemma}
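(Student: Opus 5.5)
We will prove the Bessel--Legendre inequality as an orthogonal-projection (Bessel) estimate in the weighted space $L^2([-\tau_i,0],\mathbb{R}^n)$ with inner product $\langle f,g\rangle_{R_i}=\int_{-\tau_i}^0 f(s)^\top R_i g(s)\,\mathrm{d}s$. Since $R_i\geqslant 0$, this is a positive semidefinite form and the usual projection argument still applies. By Lemma~\ref{Lemma:Legendre_polynomials_properties}, the functions $L_k^i(\cdot)I_n$, $k=0,\dots,N$, are mutually orthogonal, with $\int_{-\tau_i}^0 (L_k^i)^2\,\mathrm{d}s=\tau_i/(2k+1)$.

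First we define the projection coefficients $\Delta_k:=\int_{-\tau_i}^0 L_k^i(s)\dot x_t(s)\,\mathrm{d}s\in\mathbb{R}^n$ and the residual $z(s)=\dot x_t(s)-\sum_{k=0}^N\frac{2k+1}{\tau_i}L_k^i(s)\Delta_k$. By orthogonality, $\int L_k^i(s)z(s)\,\mathrm{d}s=0$ for $k\le N$. Expanding $0\le\int z^\top R_i z\,\mathrm{d}s$ then gives
\[
\int_{-\tau_i}^0\dot x_t^\top R_i\dot x_t\,\mathrm{d}s\;\ge\;\sum_{k=0}^N\frac{2k+1}{\tau_i}\Delta_k^\top R_i\Delta_k ,
\]
because the cross terms are $-2\sum_k\frac{2k+1}{\tau_i}\Delta_k^\top R_i\Delta_k$ and the square of the sum collapses diagonally to $+\sum_k\frac{2k+1}{\tau_i}\Delta_k^\top R_i\Delta_k$. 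This step only requires that $R_i$ be symmetric and that the form $\langle\cdot,\cdot\rangle_{R_i}$ be nonnegative. We apply it to $x_t$ restricted to $[-\tau_i,0]$, which is $C^1$.

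Second, we express each $\Delta_k$ through $\xi^i_N(t)$ by integrating by parts and using the boundary values and the differentiation formula of Lemma~\ref{Lemma:Legendre_polynomials_properties}:
\[
\Delta_k = L_k^i(0)x_t(0)-L_k^i(-\tau_i)x_t(-\tau_i)-\int_{-\tau_i}^0\dot L_k^i(s)x_t(s)\,\mathrm{d}s
= x_t(0)-(-1)^kx_t(-\tau_i)-\sum_{j=0}^{k-1}(2j+1)\bigl(1-(-1)^{k+j}\bigr)\tfrac{1}{\tau_i}\Omega^i_j(t).
\]
Since $-(-1)^k=(-1)^{k+1}$, this is exactly $\Gamma_N(k)\xi^i_N(t)$. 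The coefficients are $\gamma^j_{Nk}=-(2j+1)(1-(-1)^{k+j})$ for $j<k$; for $j=k$ the factor $1-(-1)^{2k}=0$, so including $j=k$ in the definition is harmless, and the coefficients vanish for $j>k$. Since $k\le N$, every $j\le k-1\le N-1$ appears in $\xi^i_N$. The case $N=0$ is handled separately: $\Delta_0=x_t(0)-x_t(-\tau_i)=[I\ \ -I]\xi^i_0$, which is Jensen's inequality.

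Substituting $\Delta_k=\Gamma_N(k)\xi_N^i(t)$ into the Bessel bound yields \eqref{eq:bessel}. The main obstacle is bookkeeping rather than analysis. The sign conventions in $\gamma^j_{Nk}$ and in the $(-1)^{k+1}$ entry must match the integration-by-parts signs, and the normalization $\frac{1}{\tau_i}\Omega^i_j$ in $\xi^i_N$ must absorb the $\frac{1}{\tau_i}$ factor coming from $\dot L^i_k$.
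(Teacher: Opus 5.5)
Your proof is correct and is essentially the argument the paper cites from Seuret--Gouaisbaut: the Bessel projection estimate for $\dot x_t$ onto the orthogonal Legendre basis, followed by integration by parts to identify each $\Delta_k$ with $\Gamma_N(k)\xi^i_N(t)$. Your remark that $z(s)^\top R_i z(s)\ge 0$ holds pointwise whenever $R_i\geqslant 0$ also settles the semidefinite relaxation directly, which the paper handles only by reference to another lemma.
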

\begin{proof}
The proof is identical to the one given 
in \cite{seuret2015hierarchy}. The assumption that 
$R_i$ is positive definite can be relaxed to positive semidefinite 
without affecting the result by following the same argument as in the proof of  \cite[Lemma 2.2]{haskouki2026delaydependentpassivitystabilitylinear}. 
\end{proof}
\begin{remark}
The Bessel–Legendre inequality is a generalization of classical Jensen and Wirtinger-based
inequalities, which are obtained for $N=0$ and $N=1$, respectively, which were used for delay systems in \cite{seuret2013wirtinger,Fridman2014}.
\end{remark}
\subsection{Dissipativity via nonlinear matrix inequalities}
In the following, we aim to show delay-dependent dissipativity with respect to the quadratic supply rate  \eqref{eq:supply_rate} by using a modified Lyapunov--Krasovskii function from \cite{seuret2015hierarchy} as a storage function on $C^1([-\tau_q,0],\mathbb{R}^n).$ 

We consider the following storage function :
\begin{equation}
\begin{split}\label{eq:storage_fixed_delay_nonlinear}
\mathcal{S}_N(x_t) &= \mathcal{S}_1(x(t))+\hat{x}_{N}(t)^\top P_{N} \hat{x}_{N}(t) + 
\sum_{i=1}^q \int_{t-\tau_i}^{t} \Theta_i( x(s) )\mathrm{d}s\\
&+ \sum_{i=1}^q \tau_i \int_{t-\tau_i}^{t} \int_{\theta}^{t} \dot{x}(s)^\top R_i\dot{x}(s) 
\mathrm{d}s \mathrm{d}\theta,
\end{split}
\end{equation}
with the following extra-states 
\begin{equation*}
\hat{x}_{N}(t) = \begin{bmatrix}
\Omega_0^{1}(t)^\top & 
\ldots &\Omega_{N-1}^{1}(t)^\top &
\ldots &
\Omega_{0}^{q}(t)^\top & \ldots &  \Omega_{N-1}^{q}(t)^\top
\end{bmatrix}^\top, \hspace{2mm} \text{if} \hspace{2mm} N \geq 1,
\end{equation*} 
and $\hat{x}_0(t)=0$ if $N=0,$ 
for some continuously differentiable $\mathcal{S}_1:\mathbb{R}^n\rightarrow[0,\infty)$, $P_N\in \mathbb{R}^{qnN \times qnN}$ positive semi-definite, continuous positive semi-definite function 
$\Theta_i:\mathbb{R}^n\rightarrow[0,\infty)$ and $R_i\geq 0,$ for $i \in \{1,\ldots,q\}.$ Furthermore, we consider the following matrix functions for $x_0,x_i\in\mathbb{R}^n$  and $\xi_N^i\in \mathbb{R}^{n(N+2)}$ 
 {\footnotesize \begin{align*}
 \Psi_{xx}(x_0, \ldots,x_q,\xi^1_{N},\ldots,\xi^q_N)&:=\nabla\mathcal{S}_1(x_0)^\top f(x_0,\ldots,x_q)+  2\zeta(t)^\top\mathrm{sym}(G^\top_NP_N H_N)\zeta(t)\\
 &+\sum_{i=1}^q\tau_i^2 f(x_0,x_1,\ldots,x_q)^{\top} R_if(x_0,x_1,\ldots,x_q)\\
 &+ \sum_{i=1}^q \Theta_i(x_0)-\sum_{i=1}^q\Theta_i(x_i)\\
&{-\zeta(t)^\top \mathbf{\Gamma_N}^\top \mathbf{R_N}\mathbf{\Gamma_N}\zeta(t)}\\
&-h(x_0)^\top S_{11}h(x_0).
\\ 
  \Psi_{xu}(x_0,\ldots,x_q)&:=\tfrac12 \nabla\mathcal{S}_1(x_0)^\top g(x_0)+\sum_{i=1}^q \tau_i ^2f(x_0,x_1,\ldots,x_q)^\top R_i g(x_0)\\
  &-h(x_0)^\top S_{12}\\
  \Psi_{uu}(x_0)&:=\sum_{i=1}^q \tau_i^2g(x_0)^\top R_i g(x_0)-S_{22},\\
\zeta(t)&=\left[\begin{smallmatrix}x_t(0)^\top \quad x_t(-\tau_1)^\top \ldots\,  x_t(-\tau_q)^\top\quad \left[\frac{1}{\tau_i}\Omega_0^{i}(t)^\top \ldots\,  \frac{1}{\tau_i}\Omega_{N-1}^{i}(t)^\top\right]_{i=1}^q 
\end{smallmatrix}\right]^\top,\\
G_N&=\begin{bmatrix} 
0&0&0&0\\
  0& 0 & \rm diag( \tau_i I_{nN})_{i=1}^q&0 \end{bmatrix},\hspace{2mm} H_N=\begin{bmatrix}
\tilde \Gamma_{N,1}(0)\\ \vdots\\
\tilde \Gamma_{N,1}(N-1) \\
\tilde \Gamma_{N,2}(0)\\
\vdots
\\\tilde \Gamma_{N,q}(N-1) 
\end{bmatrix},\\
R_{N,i}&=\rm diag(R_i,3R_i, \ldots, (2N+1)R_i),\\
 \mathbf{R_N}&=\rm diag(R_{N,1}, \ldots, R_{N,q}),\\
\mathbf{\Gamma_N} &= \begin{bmatrix} \Gamma_{N,1} \\  \Gamma_{N,2} \\ \vdots \\ \Gamma_{N,q} \end{bmatrix},  \hspace{2mm} \Gamma_{N,i} = \begin{bmatrix} \tilde\Gamma_{N,i}(0) \\ \tilde \Gamma_{N,i}(1) \\ \vdots \\ \tilde\Gamma_{N,i}(N) \end{bmatrix},\\
\tilde \Gamma_{0,i}(0)&=
\begin{bmatrix}
I & 0 & \ldots &0  &
\underbrace{-I}_{(i+1)\text{-th block}} &
0 & \ldots & 0
\end{bmatrix}\\
\text{and  if }  N\geq 1, \text{we have}
\end{align*}}
$\tilde \Gamma_{N,i}(k)=
\left[\begin{smallmatrix}
I \quad 0 \quad \ldots \quad 0 \quad 
\underbrace{(-1)^{k+1}I}_{(i+1)\text{-th block}} \quad
0 \quad \ldots \quad 0 \quad
\gamma_{Nk}^{0}I \quad
\ldots \quad
\gamma_{Nk}^{N-1}I \quad
0 \quad \ldots \quad 0
\end{smallmatrix}\right],$\\
for all $i=1,\cdots,q.$
\vspace{2mm}\\
In the following theorem we present the main result on a nonlinear matrix inequality which is essentially based on a~general result to derive a~storage functions for nonlinear systems~\cite{Sch17}. 

\begin{theorem}\rm
\label{thm:delay_dissipative}
Consider the time-delay system~\eqref{eq:cnt_time_delay_system_nonlinear} for some  $\tau_q\geq \ldots \geq \tau_1 > 0$ with respect to the quadratic supply rate $s$ given by \eqref{eq:supply_rate}. If there exists a function $\mathcal{S}_{N}$ given by \eqref{eq:storage_fixed_delay_nonlinear} such that for all $x_0,\ldots,x_q\in\mathbb{R}^n$ and $\xi^1_N,\ldots,\xi^q_N\in \mathbb{R}^{n(N+2)}$ 
\begin{align}
    \label{eq:nonlinear_LMI1}
    \begin{bmatrix}
    \Psi_{xx}(x_0,\ldots,x_q,\xi^1_N,\ldots,\xi^q_N) & \Psi_{xu}(x_0,\ldots,x_q)\\ \Psi_{xu}(x_0,\ldots,x_q)^\top & \Psi_{uu}(x_0)
\end{bmatrix}\leq 0,
\end{align}
then \eqref{eq:cnt_time_delay_system_nonlinear} is dissipative.
\end{theorem}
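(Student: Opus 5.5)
The plan is to show that along any solution $x$ of \eqref{eq:cnt_time_delay_system_nonlinear} with $C^1$ history $\phi$, the function $t\mapsto \mathcal{S}_N(x_t)$ is differentiable for $t\ge 0$ and satisfies the pointwise bound
\[
\tfrac{\mathrm{d}}{\mathrm{d}t}\mathcal{S}_N(x_t)\le s(y(t),u(t)).
\]
Integrating this bound from $t_0$ to $t_1$ gives \eqref{eq:dissip_ineq_cont_time}. In addition, $\mathcal{S}_N\ge 0$ since $\mathcal{S}_1,\Theta_i\ge0$, $P_N\ge0$ and $R_i\ge0$. We also need $\mathcal{S}_N(0)=0$, which holds provided $\mathcal{S}_1(0)=0$ and $\Theta_i(0)=0$; I would state this normalization explicitly, since it is implicit in the requirement that $\mathcal{S}_N$ be a storage function.

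\textbf{Step 1 (differentiate the terms of $\mathcal{S}_N$).} Write $x_0=x(t)$ and $x_i=x(t-\tau_i)$, so that $\dot x(t)=f(x_0,\ldots,x_q)+g(x_0)u$.
\begin{itemize}
\item The first term gives $\nabla\mathcal{S}_1(x_0)^\top(f+gu)$.
\item The $\Theta_i$-integrals give $\sum_i\Theta_i(x_0)-\Theta_i(x_i)$.
\item The double integrals give $\sum_i\tau_i^2\dot x(t)^\top R_i\dot x(t)-\tau_i\int_{t-\tau_i}^t\dot x^\top R_i\dot x\,\mathrm{d}s$.
\item For the extra states, substitute $s\mapsto t+s$ and integrate by parts using Lemma~\ref{Lemma:Legendre_polynomials_properties}. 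With the boundary values and the differentiation formula this yields
\[
\dot\Omega_k^i(t)=x(t)-(-1)^kx(t-\tau_i)-\sum_{j<k}(2j+1)\bigl(1-(-1)^{k+j}\bigr)\tfrac{1}{\tau_i}\Omega_j^i(t).
\]
This equals $\tilde\Gamma_{N,i}(k)\zeta(t)$ up to the sign conventions, so $\dot{\hat x}_N=H_N\zeta$. Together with $\hat x_N=\mathrm{diag}(\tau_iI)\cdot(\tfrac1{\tau_i}\Omega)$, i.e. $\hat x_N$ is the lower block of $G_N\zeta$, this gives $\tfrac{\mathrm{d}}{\mathrm{d}t}\hat x_N^\top P_N\hat x_N=2\zeta^\top\mathrm{sym}(G_N^\top P_NH_N)\zeta$.
\end{itemize}
I expect this bookkeeping to be the main obstacle: checking that the indices and signs of $\gamma^j_{Nk}$, the $(-1)^{k+1}$ block and the block structure of $H_N$, $G_N$ match exactly. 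I would verify it first for $N=1$ and then argue by the general formula.

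\textbf{Step 2 (bound the integral terms).} Apply Lemma~\ref{lemma:legendre} to $x_t$ restricted to $[-\tau_i,0]$ for each $i$ and multiply by $\tau_i$. This gives
\[
-\tau_i\int_{-\tau_i}^0\dot x_t^\top R_i\dot x_t\le -\xi^i_N(t)^\top\Bigl[\textstyle\sum_k(2k+1)\Gamma_N(k)^\top R_i\Gamma_N(k)\Bigr]\xi^i_N(t).
\]
Since $\Gamma_N(k)\xi_N^i=\tilde\Gamma_{N,i}(k)\zeta$, summing over $i$ turns the right-hand side into $-\zeta^\top\mathbf{\Gamma_N}^\top\mathbf{R_N}\mathbf{\Gamma_N}\zeta$. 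Note that the $C^1$ regularity of $x_t$ holds for $t\ge\tau_q$ and for histories compatible with the dynamics. Otherwise $x$ is piecewise $C^1$, which is enough because the Bessel inequality only needs $\dot x\in L^2$; I would remark on this.

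\textbf{Step 3 (assemble the quadratic form).} Expand $\tau_i^2(f+gu)^\top R_i(f+gu)$ and subtract $s(y,u)$ with $y=h(x_0)$. Collecting terms, the bound becomes
\[
\tfrac{\mathrm{d}}{\mathrm{d}t}\mathcal{S}_N-s\le\begin{bmatrix}1\\u\end{bmatrix}^\top\begin{bmatrix}\Psi_{xx}&\Psi_{xu}\\\Psi_{xu}^\top&\Psi_{uu}\end{bmatrix}\begin{bmatrix}1\\u\end{bmatrix}.
\]
The cross terms $\nabla\mathcal{S}_1^\top gu$, $2\tau_i^2f^\top R_igu$ and $-2h^\top S_{12}u$ account for the factor $\tfrac12$ and the factor $2$ in $\Psi_{xu}$. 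The right-hand side is evaluated at $x_0,\ldots,x_q$ and at $\xi^i_N=\xi_N^i(t)$, so it is $\le0$ by \eqref{eq:nonlinear_LMI1}. This proves the pointwise bound, and integrating gives the result.
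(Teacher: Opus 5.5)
Your proposal is correct and follows the paper's proof step for step. Like the paper, you differentiate each term of $\mathcal{S}_N$, compute $\dot\Omega^i_k$ by integration by parts so that $\dot{\hat x}_N=H_N\zeta$, bound the remaining integral with the Bessel--Legendre inequality, collect everything into the quadratic form in $[1;u]$, and then integrate. In fact $\dot\Omega^i_k=\tilde\Gamma_{N,i}(k)\zeta$ holds exactly, with no sign adjustment needed. Your extra remarks on the normalization $\mathcal{S}_1(0)=\Theta_i(0)=0$ and on the merely piecewise-$C^1$ regularity of $x_t$ for $t<\tau_q$ address points the paper leaves implicit.
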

\begin{proof}
To verify the dissipativity, we consider the storage function \eqref{eq:storage_fixed_delay_nonlinear}, and show that 
\begin{align}
    \tfrac{\rm d}{ {\rm d}t}\mathcal{S}_{N}(x_t)&=\nabla\mathcal{S}_1(x(t))^\top\dot x(t) +
    2 \hat{x}_{N}(t)^\top P_{N} \dot{\hat{x}}_{N}(t)\nonumber\\ 
    &~~~~+\sum_{i=1}^q \tfrac{\rm d}{{\rm d}t} \int_{t-\tau_i}^t\Theta_i(x(s))\mathrm{d}s \label{eq:Theta_term} \\ &~~~~+ \sum_{i=1}^q  \tau_i \tfrac{\rm d}{{\rm d}t} \int_{t-\tau_i}^t\int_\theta^t \dot x(s)^\top R_i\dot{x}(s) \mathrm{d}s\mathrm{d}\theta  \label{eq:R_term} \\ 
&\leq s(y(t),u(t))=\begin{bmatrix}
    y(t)\\ u(t)
\end{bmatrix}^\top\begin{bmatrix}
    S_{11} & S_{12}\\ S_{12}^\top & S_{22}
\end{bmatrix}\begin{bmatrix}
    y(t)\\ u(t)
\end{bmatrix} \label{eq:dissipativity_estimate}
\end{align}
which should hold for all admissible inputs $u$ and all $t\geq 0$. 
Using the integration by parts and Lemma \ref{Lemma:Legendre_polynomials_properties}, we get
\begin{align*}
    \tfrac{\rm d}{{\rm d}t}\Omega^i_k(t) &= \int_{-\tau_i}^0 L^i_k(s) 
    \dot x_t(s) \, \mathrm{d}s\\
    &= \Big[ L^i_k(s) x(t+s) \Big]_{-\tau_i}^0 - \int_{-
    \tau_i}^0 
    \dot{L}^i_k(s) x(t+s) \, \mathrm{d}s \\
    &= L^i_k(0)x(t) - L^i_k(-\tau_i)x(t-\tau_i) - \int_{-\tau_i}^0 
    \dot{L}^i_k(s) x(t+s) \, \mathrm{d}s\\
    &=x(t) - (-1)^k x(t-\tau_i) - \scalebox{0.86}{$ \int_{-\tau_i}^0 \left( 
   \displaystyle
\sum_{j=0}^{k-1} \frac{2j+1}{\tau_i} (1 - (-1)^{k+j}) 
    L^i_j(s) \right) x(t+s) \, \mathrm{d}s$}\\
    &=x(t) - (-1)^k x(t-\tau_i) - \scalebox{0.86}{$ \displaystyle
\sum_{j=0}^{k-1} 
    \frac{2j+1}
    {\tau_i} (1 - (-1)^{k+j}) \underbrace{\int_{-\tau_i}^0 
    L^i_j(u) 
    x(t+s) \, \mathrm{d}s}_{\Omega^i_j(t)}$}\\
    &=x(t) - (-1)^k x(t-\tau_i) - \frac{1}{\tau_i} 
    \sum_{j=0}^{k-1} (2j+1) (1 - (-1)^{k+j}) \Omega^i_j(t)\\
    &=x(t) - (-1)^k x(t-\tau_i) + \frac{1}{\tau_i} 
    \sum_{j=0}^{k-1} \gamma^j_{Nk} \Omega^i_j(t).
\end{align*}
For $
\hat{x}_N(t) =G_N
\zeta(t)$ and $\dot{\hat{x}}_N(t)= H_N \zeta(t).$
Using the Leibniz rule for 
\eqref{eq:Theta_term} we obtain
\[
\tfrac{\rm d}{{\rm d}t}\sum_{i=1}^q\int_{t-\tau_i}^t\Theta_i(x(s))\mathrm{d}s=\sum_{i=1}^q\Theta_i(x(t))- \sum_{i=1}^q\Theta_i(x(t-\tau_i)).
\]

To compute the derivative of the term \eqref{eq:R_term}, we interchange the order of integration first 
\begin{align*}
\sum_{i=1}^q \tau_i \int_{t-\tau_i}^t\int_s^t\dot x(v)^\top R_i \dot x(v)\mathrm{d}v\mathrm{d}s&=\sum_{i=1}^q \tau_i \int_{t-\tau_i}^t\int_{t-\tau_i}^v\dot x(v)^\top R_i\dot x(v)\mathrm{d}s\mathrm{d}v\\&=\sum_{i=1}^q \tau_i \int_{t-\tau_i}^t(v-(t-\tau_i))\dot x(v)^\top R_i\dot x(v)\mathrm{d}v
\end{align*}
which allows for application of the Leibniz rule
{\footnotesize \begin{align}
\sum_{i=1}^q \tau_i\frac{\rm d}{{\rm d}t}\int_{t-\tau_i}^t\int_s^t\dot x(v)^\top R_i \dot x(v)\mathrm{d}v\mathrm{d}s&=\sum_{i=1}^q \tau_i \frac{\rm d}{{\rm d}t}\int_{t-\tau_i}^t(v-(t-\tau_i))\dot x(v)^\top R_i \dot x(v)\mathrm{d}v \nonumber \\
&=\footnotesize\sum_{i=1}^q \tau_i^2 \dot x(t)^\top R_i \dot x(t)-\sum_{i=1}^q \tau_i\int_{t-\tau_i}^t\dot x(s)^\top R_i \dot x(s)\mathrm{d}s. \label{eq:after_leibniz}
\end{align}}
The integral in \eqref{eq:after_leibniz} can be estimated using the Bessel--Legendre inequality \eqref{eq:bessel} from Lemma \ref{lemma:legendre}
\begin{align*}\nonumber
\sum_{i=1}^q \tau_i \int_{-\tau_i}^{0} \dot{x}(u)^\top R_i\dot{x}
(u)du &\geq \sum_{i=1}^q \xi_N^{i}(t)^\top\left[ 
\sum_{k=0}^{N} (2k+1) \Gamma_N(k)^\top R_i
\Gamma_N(k) \right] \xi^i_N(t)\\
&=\zeta(t)^\top \mathbf{\Gamma_N}^\top \mathbf{R_N}\mathbf{\Gamma_N}\zeta(t).
\end{align*}
This leads to 
{\footnotesize\begin{align}\nonumber
&~~~~   \sum_{i=1}^q \tau_i \frac{\rm d}{{\rm d}t}\int_{t-\tau_i}^t\int_s^t\dot x(v)^\top R_i\dot x(v)\mathrm{d}v\mathrm{d}s  \leq\sum_{i=1}^q \tau_i^2\dot{x}(t)^\top R_i \dot{x}(t)- \zeta(t)^\top\mathbf{\Gamma_N}^\top \mathbf{R_N}\mathbf{\Gamma_N}\zeta(t).\\
\nonumber
\end{align}}
We define 
\begin{align*}
\Psi(x_0,x_1,\ldots,x_q,\xi^1_N,\ldots,\xi^q_N)&:=\nabla\mathcal{S}_1(x_0)^\top(f(x_0,x_1,\ldots,x_q)+g(x_0)u)\\&+\zeta^\top(2\mathrm{sym}(G^\top_N P_N H_N)-\mathbf{\Gamma_N}^\top \mathbf{R_N}\mathbf{\Gamma_N})\zeta\\&+\sum_{i=1}^q \tau_i^2(f(x_0,x_1,\ldots,x_q)\\
&+g(x_0)u)^\top R_i (f(x_0,x_1,\ldots,x_q)+g(x_0)u)\\ &+\sum_{i=1}^q \Theta_i(x_0)- \sum_{i=1}^q\Theta_i(x_i) \\ 
&-h(x_0)^\top S_{11}h(x_0)-h(x_0)^\top S_{12}u-u^\top S_{12}^\top h(x_0)\\
&-u^\top S_{22}u.
\end{align*}
Then summarizing the above terms and using the assumption~\eqref{eq:nonlinear_LMI1}, we find that 
\footnotesize\begin{align}
\Psi(x_0,x_1,\ldots,x_q,\xi^1_N,\ldots,\xi^q_N,u)=\begin{bmatrix}
    1\\ u
\end{bmatrix}^\top\left[ \begin{smallmatrix}
    \Psi_{xx}(x_0,\ldots,x_q,\xi^1_N,\ldots,\xi^q_N) & \Psi_{xu}(x_0,\ldots,x_q)\\ \Psi_{xu}(x_0,\ldots,x_q)^\top & \Psi_{uu}(x_0)
\end{smallmatrix}\right]\begin{bmatrix}
    1\\ u
\end{bmatrix}\leq 0, \label{eq:Psi_estimate}
\end{align}
where $x_0=x(t),$ $x_i=x(t-\tau_i),$ where $i \in \{1,\ldots,q\}.$ 
Hence, \eqref{eq:Psi_estimate} implies \eqref{eq:dissipativity_estimate}. Integration of~\eqref{eq:dissipativity_estimate} over $[t_0,t_1]$ implies the dissipation inequality~\eqref{eq:dissip_ineq_cont_time}.
\end{proof}
If we take $P_N=R_i=0,$ $\forall i  \in \{1,\ldots,q\}$ in \eqref{eq:storage_fixed_delay_nonlinear}, we obtain the following storage function 
\begin{equation}\label{eq:storage_function_different_delays}
\begin{split}
\mathcal{S}(x_t) &= \mathcal{S}_1(x(t))+\sum_{i=1}^q 
\int_{t-\tau_i}^{t}  \Theta_i( x(s)) \mathrm{d}s,
\end{split}
\end{equation}
for some continuously differentiable $\mathcal{S}_1:\mathbb{R}^n\rightarrow[0,\infty)$ and continuous positive semi-definite function 
$\Theta_i:\mathbb{R}^n\rightarrow[0,\infty),$ which leads to a delay-independent dissipativity condition. 
\begin{corollary}
The system~\eqref{eq:cnt_time_delay_system_nonlinear} is dissipative with respect to the quadratic supply rate $s$ given by \eqref{eq:supply_rate}. If there exists a function $\mathcal{S}$ given by \eqref{eq:storage_function_different_delays} such that for all $x_0,\ldots,x_q\in\mathbb{R}^n$
\begin{align*}
    \begin{bmatrix}
    \Psi_{xx}(x_0,x_1,\ldots,x_q) & \Psi_{xu}(x_0)\\ \Psi_{xu}(x_0)^\top & \Psi_{uu}(x_0)
\end{bmatrix}\leq 0,
\end{align*}
where,
\begin{align*}
 \Psi_{xx}(x_0,x_1,\ldots,x_q)&:=\nabla\mathcal{S}_1(x_0)^\top f(x_0,x_1,\ldots,x_q)+\sum_{i=1}^q\Theta_i(x_0)-\sum_{i=1}^q\Theta_i(x_i)\\
 &-h(x_0)^\top S_{11}h(x_0),\\
  \Psi_{xu}(x_0)&:=\tfrac{1}{2}\nabla\mathcal{S}_1(x_0)^\top g(x_0)-h(x_0)^\top S_{12},\\
  \Psi_{uu}(x_0)&:=-S_{22}.
\end{align*}
\end{corollary}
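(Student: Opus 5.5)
The plan is to obtain the corollary as the special case $P_N=0$, $R_i=0$ ($i=1,\ldots,q$) of Theorem~\ref{thm:delay_dissipative}, while also giving the short direct argument so the result does not depend on the extra Legendre states. With $P_N=0$ and $R_i=0$, the storage function \eqref{eq:storage_fixed_delay_nonlinear} reduces to \eqref{eq:storage_function_different_delays}. Several terms in the matrix functions of Theorem~\ref{thm:delay_dissipative} then vanish:
\begin{itemize}
\item the term $2\zeta^\top\mathrm{sym}(G_N^\top P_N H_N)\zeta$;
\item the terms $\tau_i^2 f^\top R_i f$, $\tau_i^2 f^\top R_i g$ and $\tau_i^2 g^\top R_i g$;
\item the Bessel--Legendre term $\zeta^\top\mathbf{\Gamma_N}^\top\mathbf{R_N}\mathbf{\Gamma_N}\zeta$, since $\mathbf{R_N}=0$.
\end{itemize}
Hence $\Psi_{xx}$, $\Psi_{xu}$, $\Psi_{uu}$ of the theorem coincide with those in the corollary. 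They no longer depend on $\xi_N^i$, and $\Psi_{xu}$ depends only on $x_0$. The corollary's hypothesis is therefore exactly \eqref{eq:nonlinear_LMI1} for this choice, and the theorem yields dissipativity.

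To be safe, I would also check directly the part of the proof that survives. Let $x$ be the solution with history $\phi\in C^1$. Along $x$, $t\mapsto\mathcal{S}(x_t)$ is continuously differentiable for $t\geq 0$, because $\mathcal{S}_1\in C^1$, $\Theta_i$ is continuous and $x$ is $C^1$ on $[0,\infty)$. Differentiation gives
\begin{equation*}
\tfrac{\rm d}{{\rm d}t}\mathcal{S}(x_t)=\nabla\mathcal{S}_1(x(t))^\top\big(f(x(t),x(t-\tau_1),\ldots,x(t-\tau_q))+g(x(t))u(t)\big)+\sum_{i=1}^q\Theta_i(x(t))-\sum_{i=1}^q\Theta_i(x(t-\tau_i)),
\end{equation*}
where the $\Theta_i$ terms come from the Leibniz rule, exactly as for \eqref{eq:Theta_term}. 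Set $x_0=x(t)$ and $x_i=x(t-\tau_i)$, and expand $s(h(x_0),u)$. A direct computation then shows
\begin{equation*}
\tfrac{\rm d}{{\rm d}t}\mathcal{S}(x_t)-s(y(t),u(t))=\begin{bmatrix}1\\ u(t)\end{bmatrix}^\top\begin{bmatrix}\Psi_{xx} & \Psi_{xu}\\ \Psi_{xu}^\top & \Psi_{uu}\end{bmatrix}\begin{bmatrix}1\\ u(t)\end{bmatrix}.
\end{equation*}
The factor $\tfrac12$ in $\Psi_{xu}$ accounts for the two off-diagonal contributions, which produce $\nabla\mathcal{S}_1^\top g u$ and $-2h^\top S_{12}u$. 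By the assumed negative semidefiniteness this quantity is $\leq 0$. Integrating over $[t_0,t_1]$ gives \eqref{eq:dissip_ineq_cont_time}.

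It remains to check the requirements on a storage function. Nonnegativity holds since $\mathcal{S}_1\geq 0$ and $\Theta_i\geq 0$. For $\mathcal{S}(0)=0$ we need $\mathcal{S}_1(0)=0$ and $\Theta_i(0)=0$. The latter is part of positive semidefiniteness; the former I would impose, or else normalize $\mathcal{S}_1$ by subtracting $\mathcal{S}_1(0)$, provided this keeps $\mathcal{S}_1\geq 0$. I expect this normalization, together with the regularity needed for the Leibniz rule, to be the main care point; the matrix identity itself is routine.

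Finally, the conditions contain no $\tau_i$, so the same storage function works for every delay. This justifies reading the corollary as a delay-independent criterion.
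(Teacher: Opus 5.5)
Your proposal is correct and follows the paper's route. The paper's proof is just the remark that the corollary is the case $P_N=0$, $R_i=0$ of the proof of Theorem~\ref{thm:delay_dissipative}, which is exactly your specialization, and your computation of $\tfrac{\mathrm{d}}{\mathrm{d}t}\mathcal{S}(x_t)-s(y,u)$ as a quadratic form in $(1,u)$ is the surviving part of that proof written out.

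Your caveat about $\mathcal{S}(0)=0$ is a genuine point the paper passes over, and it must be settled by imposing $\mathcal{S}_1(0)=0$ as a hypothesis, because renormalization can fail. For example, take $\dot x=1-x$, $y=1-x$, $s=-y^2$: the choice $\mathcal{S}_1(x)=(1-x)^2$, $\Theta_i=0$ satisfies the matrix inequality, yet from the zero history the supply integral over $[0,t]$ is negative for every $t>0$, so no storage with $\mathcal{S}\geq 0$ and $\mathcal{S}(0)=0$ exists.
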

\begin{proof}
 The result follows from the proof of Theorem \ref{thm:delay_dissipative}.
\end{proof}

\subsection{Interconnection of passive time-delay systems}
In the special case of impedance passive systems, the passivity is preserved under a particular type of negative feedback interconnections. For a similar result for nonlinear delay-systems using a slightly weaker notion of passivity we refer to \cite{kawano2023passivity}. 

We consider two impedance passive time-delay systems of the form
\begin{align}\label{eq:Interconnection}
\nonumber 
\dot x_j(t) &= f_j(x_j(t),x_j(t-\tau_{1,j}),\ldots, x_j(t-\tau_{q,j})) 
  + g_j(x_j(t))u_j(t), \quad t\geq 0,\\
y_j(t) &= h_j(x_j(t)),\quad \quad j=1,2,
\end{align}
with storage functions $\mathcal{S}_1$ and $\mathcal{S}_2$, respectively.
\begin{proposition}
If the systems \eqref{eq:Interconnection} are impedance passive  with storage functions $\mathcal{S}_1$ and $\mathcal{S}_2$ then the negative feedback interconnection
\[
u_1=v-y_2, \quad u_2=y_1,
\]
with external input $v$ is impedance passive with storage function $\mathcal{S}=\mathcal{S}_1+\mathcal{S}_2$.  
\end{proposition}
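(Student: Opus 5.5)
The plan is to add the two dissipation inequalities and observe that the coupling terms cancel. The interconnected system has state $(x_1,x_2)$. Its history segment $(x_{1,t},x_{2,t})$ lives on $[-\tau,0]$ with $\tau=\max_{i,j}\tau_{i,j}$. Each $\mathcal{S}_j$ only depends on the restriction of $x_{j,t}$ to $[-\max_i\tau_{i,j},0]$, so it can be regarded as a functional of the longer segment. I then define $\mathcal{S}(x_{1,t},x_{2,t})=\mathcal{S}_1(x_{1,t})+\mathcal{S}_2(x_{2,t})$. This functional is nonnegative and vanishes at zero because each summand does. I take the output of the interconnection to be $y=y_1$, which is the natural output conjugate to $v$ for the negative feedback loop. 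The input is $v$.

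First I check that the interconnection fits the setting. With $u_1=v-h_2(x_2(t))$ and $u_2=h_1(x_1(t))$, the closed loop is again a delay system. Its right-hand side is $f_j$ plus $g_j$ times locally Lipschitz output maps, so for any $C^1$ history and admissible $v$ there is a (local) solution. Along this solution, each component $x_j$ is a solution of subsystem $j$ driven by the input $u_j$. Hence the dissipation inequality \eqref{eq:dissip_ineq_cont_time} with $s_{imp}$ applies to each subsystem separately, for all $0\le t_0\le t_1$:
\[
\mathcal{S}_j(x_{j,t_1})-\mathcal{S}_j(x_{j,t_0})\le \int_{t_0}^{t_1} y_j(s)^\top u_j(s)\,\mathrm{d}s,\qquad j=1,2,
\]
using $s_{imp}(y,u)=\tfrac12(y^\top u+u^\top y)=y^\top u$.

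Adding the two inequalities and substituting the interconnection gives the integrand
\[
y_1^\top(v-y_2)+y_2^\top y_1=y_1^\top v.
\]
Hence
\[
\mathcal{S}(x_{t_1})-\mathcal{S}(x_{t_0})\le\int_{t_0}^{t_1} y_1(s)^\top v(s)\,\mathrm{d}s,
\]
which is impedance passivity with respect to $s_{imp}(y_1,v)$.

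The algebra is trivial, so the only delicate step is the bookkeeping. The first issue is justifying that the subsystem dissipation inequalities, which are stated for arbitrary inputs, may be applied when the input is generated by feedback. I would argue this by treating $u_j(\cdot)$ as a fixed continuous signal along the closed-loop trajectory: its values $h_2(x_2(\cdot))$ and $h_1(x_1(\cdot))$ are continuous in time, so it is admissible. The second issue is that the definition requires $C^1$ initial histories on the common interval $[-\tau,0]$. Restricting such a history to each subsystem's delay interval gives a $C^1$ history for that subsystem, so the subsystem inequalities apply.
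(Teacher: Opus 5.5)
Your proposal is correct and is essentially the paper's proof: add the two impedance-passivity inequalities, insert $u_1=v-y_2$, $u_2=y_1$, and let the cross terms $-y_1^\top y_2+y_2^\top y_1$ cancel, leaving $\int_{t_0}^{t_1}y_1(s)^\top v(s)\,\mathrm{d}s$ with output $y_1$. Your extra bookkeeping (common history interval, admissibility of the feedback-generated inputs, closed-loop existence) goes beyond what the paper writes. Note, however, that the local Lipschitz continuity of $h_j$ you invoke is your own assumption, since the paper imposes it only on $f$ and $g$.
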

\begin{proof}
The impedance passivity inequalities hold individually for arbitrary inputs which gives
\begin{align*}
\mathcal{S}(x_{t_1})-\mathcal{S}(x_{t_0})&=\mathcal{S}_1(x_{t_1}^1)-\mathcal{S}_1(x_{t_0}^1)+\mathcal{S}_2(x_{t_1}^2)-\mathcal{S}_2(x_{t_0}^2) \\ 
&\leq \int_{t_0}^{t_1}y_1(s)^\top u_1(s)\mathrm{d}s+\int_{t_0}^{t_1}y_2(s)^\top u_2(s)\mathrm{d}s\\
&=\int_{t_0}^{t_1}y_1(s)^\top (v-y_2)(s)\mathrm{d}s+\int_{t_0}^{t_1}y_2(s)^\top y_1(s)\mathrm{d}s\\
&=\int_{t_0}^{t_1}y_1(s)^\top v(s)\mathrm{d}s.
\end{align*}
Hence, the interconnected system is impedance passive with respect to the input $v$ and the output $y_1$.
\end{proof}

\section{Dissipativity for linear time-delay systems}
\label{sec:linear}
In this section, we study the passivity of linear time-delay systems that are given by 
\begin{align}
\nonumber 
\dot x(t) &= Ax(t) + \sum_{i=1}^q A_ix(t-\tau_i)+ 
Bu(t),\quad  t\geq 0,\\
x(t) &= \phi(t),\quad  t \in [-\tau_q , 0], \label{eq:cnt_time_delay_system}
\\
y(t) &= Cx(t),\quad t \geq  0,\nonumber 
    \end{align}
where  
$A,A_i \in \mathbb{R}^{n \times n},$  where $i  \in \{1,\ldots,q\},$ and  $B \in \mathbb{R}^{n \times m},$ $C \in \mathbb{R}^{m \times n}$ and  $\tau_q \geqslant\ldots \geqslant \tau_1 > 0.$

Using Theorem~\ref{thm:delay_dissipative}, we present a sufficient condition for dissipativity for linear time-delay systems. We consider for some positive semi-definite $\Theta_i,R_i,\tilde P_N$ quadratic storage functions of the type 
\begin{align}\label{eq:storage_fixed_delay_linear}
\mathcal{S}_{N}(x_t)&=\tilde{x}_N(t)^\top \tilde P_N \tilde{x}_N(t)+\sum_{i=1}^q \int_{t-\tau_i}^t x(s)^\top \Theta_i x(s)\mathrm{d}s\\
&+ \sum_{i=1}^q \tau_i \int_{t-\tau_i}^t\int_s^t\dot x(v)^\top R_i \dot x(v)\mathrm{d}v\mathrm{d}s,\\\nonumber
\end{align} 
with the following extra-states 
{\footnotesize \begin{equation*}
\tilde {x}_{N}(t) = \begin{bmatrix} 
x(t)^\top &
\Omega_0^{1}(t)^\top  & 
\ldots &\Omega_{N-1}^{1}(t)^\top &
\ldots&
\Omega_{0}^{q}(t)^\top & \ldots &  \Omega_{N-1}^{q}(t)^\top 
\end{bmatrix}^\top , \hspace{2mm} \text{if} \hspace{2mm} N \geq 1
\end{equation*} }
and $\tilde{x}_0(t)=x(t)$ if $N=0.$
We consider 
\vspace{4mm}\\
$\tilde{G}_{N}=\begin{bmatrix}
I_n & 0&0&0 \\
   0& 0  & \rm diag(\tau_i I_{nN})_{i=1}^q & 0 \end{bmatrix}$ and
$\tilde{H}_{N}=\begin{bmatrix}
       \begin{bmatrix} A  & A_1 &  \ldots&  A_q &      0 & B \end{bmatrix} \\
       \tilde \Gamma_{N,1}(0) \\ \vdots \\ \tilde \Gamma_{N,q}(N-1) \end{bmatrix}. $
       \vspace{4mm}\\
Furthermore, we consider the following matrices
\begin{align*}
 M_0 &=
\begin{bmatrix}
       \sum_{i=1}^q \Theta_i - C^\top S_{11}C & 0 &0&-C^\top S_{12}\\
       0& - \rm diag(\Theta_i)_{i=1}^q & 0 &0  \\ 0&0&0&0\\
       -S_{12}^\top C&0&0&-S_{22}
\end{bmatrix},\\ 
M_{1,\tau}&=\left[\begin{smallmatrix}
  A^\top \sum_{i=1}^q \tau_i^2 R_i A & M_2^{12}& 0  &  A^\top \sum_{i=1}^q \tau_i^2 R_i  B \\
 *&  \begin{bmatrix}
A_i^\top  \sum_{i=1}^q \tau_i^2 R_i  A_j  
 \end{bmatrix}_{i,j=1}^q& 0 &(M_2^{42})^\top \\
 0&  0&0& 0\\
 *& 
*&0 &B^\top   \sum_{i=1}^q \tau_i^2 R_i  B 
\end{smallmatrix}\right]
\end{align*}
with
\begin{align*}M_2^{12}&= \begin{bmatrix}
 A^\top \sum_{i=1}^q \tau_i^2 R_i  A_1 &  \ldots &  A^\top \sum_{i=1}^q \tau_i^2 R_i  A_q 
\end{bmatrix}, \\ M_2^{42}&=\begin{bmatrix}
B^\top   \sum_{i=1}^q \tau_i^2 R_i  A_1 &  \ldots & B^\top  \sum_{i=1}^q \tau_i^2 R_i A_q 
 \end{bmatrix}.
\end{align*}
\begin{corollary}\rm
The linear time-delay system~\eqref{eq:cnt_time_delay_system} is dissipative for fixed $\tau_q\geq \ldots \geq \tau_1 > 0$ with respect to the quadratic supply rate $s$ given by \eqref{eq:supply_rate} if there exist positive semi-definite $\Theta_i,R_i\in\mathbb{R}^{n\times n}$ and positive semi-definite $\tilde P_N\in\mathbb{R}^{n(qN+1)\times n(qN+1)}$ which fulfill the following LMI
\begin{align}
\label{eq:LMI_fixed_tau}    
2 {\sym}( \tilde G_{N}^\top \tilde P_N \tilde H_{N})+ M_0 +M_{1,\tau}-\mathbf{\Gamma_N}^\top \mathbf{R_N}\mathbf{\Gamma_N}\leq 0.
\end{align}
\end{corollary}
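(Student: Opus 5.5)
The plan is to reduce the corollary to Theorem~\ref{thm:delay_dissipative}, or more precisely to repeat its proof directly for the quadratic storage function \eqref{eq:storage_fixed_delay_linear}. The first block of $\tilde x_N(t)$ is $x(t)$, so $\tilde x_N(t)^\top \tilde P_N\tilde x_N(t)$ combines the roles of $\mathcal{S}_1(x(t))$ and $\hat x_N(t)^\top P_N\hat x_N(t)$ in \eqref{eq:storage_fixed_delay_nonlinear}, including their cross terms. We work with the augmented vector
\[
\eta(t)=\begin{bmatrix}\zeta(t)^\top & u(t)^\top\end{bmatrix}^\top ,
\]
whose blocks are ordered as $x(t)$, the delayed states $x(t-\tau_1),\ldots,x(t-\tau_q)$, the normalized Legendre moments $\frac1{\tau_i}\Omega^i_k(t)$, and finally $u(t)$. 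This ordering matches the block structure of $M_0$ and $M_{1,\tau}$. We also extend $\mathbf{\Gamma_N}$ by a zero column for $u$. Nonnegativity of $\mathcal{S}_N$ and $\mathcal{S}_N(0)=0$ follow from $\tilde P_N,\Theta_i,R_i\ge 0$.

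\textbf{Derivative of the first term.} We check that $\tilde x_N(t)=\tilde G_N\eta(t)$ and $\dot{\tilde x}_N(t)=\tilde H_N\eta(t)$. The first block row of $\tilde H_N$ is $\begin{bmatrix}A & A_1&\cdots&A_q&0&B\end{bmatrix}$, which reproduces $\dot x(t)=Ax(t)+\sum_i A_ix(t-\tau_i)+Bu(t)$. The remaining rows are the formula for $\tfrac{\rm d}{{\rm d}t}\Omega^i_k(t)$ derived in the proof of Theorem~\ref{thm:delay_dissipative} via Lemma~\ref{Lemma:Legendre_polynomials_properties}. It follows that
\[
\tfrac{\rm d}{{\rm d}t}\,\tilde x_N^\top\tilde P_N\tilde x_N=2\eta^\top\mathrm{sym}(\tilde G_N^\top\tilde P_N\tilde H_N)\eta .
\]

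\textbf{Remaining terms of the storage function.} The $\Theta_i$-terms differentiate by the Leibniz rule to $x(t)^\top(\sum_i\Theta_i)x(t)-\sum_i x(t-\tau_i)^\top\Theta_i x(t-\tau_i)$. For the double integrals we use \eqref{eq:after_leibniz} together with the Bessel--Legendre estimate of Lemma~\ref{lemma:legendre}. This bounds their derivative by
\[
\sum_i\tau_i^2\dot x^\top R_i\dot x-\eta^\top\mathbf{\Gamma_N}^\top\mathbf{R_N}\mathbf{\Gamma_N}\eta .
\]
Substituting $\dot x=\begin{bmatrix}A&A_1&\cdots&A_q&0&B\end{bmatrix}\eta$ turns $\sum_i\tau_i^2\dot x^\top R_i\dot x$ exactly into $\eta^\top M_{1,\tau}\eta$. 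To see this, expand the product and read off the blocks $A^\top(\sum\tau_i^2R_i)A_j$, $A^\top(\sum\tau_i^2R_i)B$, and so on. Finally, $-s(y,u)$ with $y=Cx$, combined with the $\Theta_i$-terms, equals $\eta^\top M_0\eta$.

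\textbf{Conclusion and main obstacle.} Collecting all terms gives
\[
\tfrac{\rm d}{{\rm d}t}\mathcal{S}_N(x_t)-s(y,u)\le \eta^\top\big(2\,\mathrm{sym}(\tilde G_N^\top\tilde P_N\tilde H_N)+M_0+M_{1,\tau}-\mathbf{\Gamma_N}^\top\mathbf{R_N}\mathbf{\Gamma_N}\big)\eta\le0
\]
by \eqref{eq:LMI_fixed_tau}. Integrating over $[t_0,t_1]$ yields \eqref{eq:dissip_ineq_cont_time}. The main obstacle is careful bookkeeping of the block dimensions and positions. Specifically, one must verify that the Legendre-moment rows $\tilde\Gamma_{N,i}(k)$ inside $\tilde H_N$ and the Bessel--Legendre form $\mathbf{\Gamma_N}^\top\mathbf{R_N}\mathbf{\Gamma_N}$ are correctly padded by the extra $u$-column, and that the $\tau_i$ scaling in $\tilde G_N$ matches the normalization $\frac1{\tau_i}\Omega^i_k$ used in $\zeta$. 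We would check these identities block by block for $N=0$ and $N=1$ first, and then argue by the general pattern.
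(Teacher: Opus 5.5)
Your proposal is correct and follows essentially the same route as the paper. You augment $\zeta(t)$ by $u(t)$ (the paper's $\tilde\zeta(t)$), use $\tilde x_N=\tilde G_N\tilde\zeta$ and $\dot{\tilde x}_N=\tilde H_N\tilde\zeta$, apply the Leibniz rule to the $\Theta_i$- and $R_i$-terms together with the Bessel--Legendre bound of Lemma~\ref{lemma:legendre}, identify the remaining quadratic forms with $M_{1,\tau}$ and $M_0$, and then invoke \eqref{eq:LMI_fixed_tau} and integrate. Your bookkeeping remarks (zero-padding $\mathbf{\Gamma_N}$ and the $\tilde\Gamma_{N,i}(k)$ rows of $\tilde H_N$ for the $u$-column, and writing $\le$ rather than $=$ at the Bessel--Legendre step) are precisely the points the paper leaves implicit.
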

\begin{proof}
To establish the 
dissipativity of system \eqref{eq:cnt_time_delay_system}, 
we consider the storage 
function \eqref{eq:storage_fixed_delay_linear} and show that 
the following inequality 
\begin{align}
    \tfrac{\rm d}{ {\rm d}t}\mathcal{S}_{N}(x_t)  
\leq s(y(t),u(t))=\begin{bmatrix}
    y(t)\\ u(t)
\end{bmatrix}^\top\begin{bmatrix}
    S_{11} & S_{12}\\ S_{12}^\top & S_{22}
\end{bmatrix}\begin{bmatrix}
    y(t)\\ u(t)
\end{bmatrix} 
\end{align}
holds for all admissible inputs $u$ and all $t\geq 0$. 
Let \begin{align*}
 \tilde\zeta(t)=\left[\begin{smallmatrix}x_t(0)^\top \quad x_t(-\tau_1)^\top \quad \ldots \quad x_t(-\tau_q)^\top\quad \left[\frac{1}{\tau_i}\Omega_0^{i}(t)^\top\quad \ldots \quad \frac{1}{\tau_i}\Omega_{N-1}^{i}(t)^\top\right]_{i=1}^q \quad u(t)
\end{smallmatrix}\right]^\top.\end{align*}
We have
\begin{align*}
    \tfrac{\rm d}{ {\rm d}t}\mathcal{S}_{N}(x_t)  
&=2 \tilde \zeta(t)^\top\sym( \tilde G_{N}^\top \tilde P_N\tilde H_{N}) \tilde\zeta(t)\\
&\footnotesize +\sum_{i=1}^q \tau_i^2 
\tilde \zeta(t)^\top\left[ \begin{smallmatrix}
    A & A_1 & \ldots & A_q &0&\ldots &B
\end{smallmatrix}\right]^\top R_i \left[\begin{smallmatrix}
    A & A_1 & \ldots & A_q &0&\ldots &B
\end{smallmatrix}\right]\tilde\zeta(t)
\\ 
&+\footnotesize\sum_{i=1}^q
x(t)^\top\Theta_i x(t)- \sum_{i=1}^q x(t-\tau_i)^\top\Theta_i x(t-\tau_i)\\
&-\footnotesize\sum_{i=1}^q \xi_N^{i \top}(t)  \left[ 
\sum_{k=0}^{N} (2k+1) \Gamma_N(k)^\top R_i 
\Gamma_N(k) \right] \xi^i_N(t)\\
&-2x(t)^\top C^\top S_{12}u(t)-u(t)^\top S_{22} u(t)-x(t)^\top C^\top S_{11} Cx(t)\\
&+2x(t)^\top C^\top S_{12}u(t)+u(t)^\top S_{22} u(t)+x(t)^\top C^\top S_{11} Cx(t)\\
&=\tilde \zeta(t)^\top(2 \sym( \tilde G_{N}^\top \tilde P_N \tilde H_{N})+ M_0 +M_{1,\tau}-\mathbf{\Gamma_N}^\top \mathbf{R_N}\mathbf{\Gamma_N})\tilde\zeta(t)\\
&+2x(t)^\top C^\top S_{12}u(t)+u(t)^\top S_{22} u(t)+x(t)^\top C^\top S_{11} Cx(t)\\
&\leq 2x(t)^\top C^\top S_{12}u(t)+u(t)^\top S_{22} u(t)+x(t)^\top C^\top S_{11} Cx(t).
\end{align*}
Using \eqref{eq:LMI_fixed_tau} and integrating \eqref{eq:dissipativity_estimate} over the interval $[t_0,t_1]$ yields the dissipation inequality~\eqref{eq:dissip_ineq_cont_time}.
\end{proof}
\begin{remark}
Note that in the impedance passive case we have $S_{22}=0$ which implies that $B^\top R_i B=0, \hspace{2mm} \forall i \in \{ 1,\ldots, q \}$ must hold. This means that $\ran B\subseteq \bigcap\limits_{i=1}^{q} \ker R_i$ which poses some restrictions on the choice of $R_i$. In particular, under the very reasonable assumption that $B\neq 0$ the matrices $R_i$ cannot be positive definite.
\end{remark}
\section{Conclusion}
\label{sec:conclusion}
In this paper, we studied dissipativity properties for a class of nonlinear time-delay systems with multiple constant delays. By employing Lyapunov--Krasovskii storage functionals together with Bessel--Legendre integral inequalities, we derived sufficient delay-dependent dissipativity conditions with respect to general quadratic supply rates. A delay-independent criterion was also obtained as a special case by considering a simplified storage functional.

For linear time-delay systems, the proposed framework led to tractable sufficient conditions expressed in terms of linear matrix inequalities. These conditions can be checked numerically using standard semidefinite programming tools and provide a systematic way to verify dissipativity, including passivity-type properties, in the presence of multiple delays.

Future work will focus on reducing the conservatism of the obtained conditions and on developing efficient numerical methods for the nonlinear case, where the resulting dissipativity conditions are generally nonconvex. Another relevant direction is the application of the proposed criteria to concrete physical and engineering systems with delayed feedback or delayed damping.

\section*{Acknowledgments}
This work was funded by the Deutsche Forschungsgemeinschaft (DFG, German
Research Foundation) – Project-ID 531152215 – CRC~1701. Ikram El Haskouki gratefully acknowledges University of Wuppertal, Germany, for hosting her during her research stay and for their support.

\bibliographystyle{abbrv}
\bibliography{references_P.bib}

\end{document}